\author[M.~Caprio]{Michele Caprio, Andrea Aveni and Sayan Mukherjee}
\address{Department of Statistical Science,
Duke University, 214 Old Chemistry, Durham, NC 27708-0251}
\email{michele.caprio@duke.edu}
\urladdr{\url{https://mc6034.wixsite.com/caprio}} 
\address{Department of Statistical Science,
Duke University, 214 Old Chemistry, Durham, NC 27708-0251}
\email{andrea.aveni@duke.edu}
\urladdr{\url{https://www.researchgate.net/profile/Andrea_Aveni}} 
\address{Department of Statistical Science, Mathematics, Computer Science, and Biostatistics \& Bioinformatics, Duke University, Durham, NC 27708-0251}
\email{sayan@stat.duke.edu}
\urladdr{\url{https://sayanmuk.github.io/}}
\keywords{Non-Diophantine arithmetics; convergence of series; paradox of the heap.}
\subjclass[2010]{Primary: 03H15; Secondary: 03C62.}
\title{Concerning three classes of Non-Diophantine arithmetics}
\newcommand{\vertiii}[1]{{\left\vert\kern-0.25ex\left\vert\kern-0.25ex\left\vert #1 
    \right\vert\kern-0.25ex\right\vert\kern-0.25ex\right\vert}}
   \def\MR#1{}
\theoremstyle{definition} 
\let\olddefi\defi
\renewcommand{\defi}{\olddefi\normalfont}
\let\oldexample\example
\renewcommand{\example}{\oldexample\normalfont}
\let\oldrmk\rmk
\renewcommand{\rmk}{\oldrmk\normalfont}
\theoremstyle{theorem}
\newtheorem{proposition}{Proposition}
\theoremstyle{definition}
\newtheorem*{definition}{Definition}
\newtheorem*{remark}{Remark}
\theoremstyle{theorem}
\newtheorem{lemma}{Lemma}
\theoremstyle{theorem}
\newtheorem{corollary}{Corollary}
\providecommand{\MR}[1]{}
\providecommand{\MR}{\relax\ifhmode\unskip\space\fi MR }
\providecommand{\href}[2]{#2}
\begin{document}

\begin{abstract}
We present three classes of abstract prearithmetics, $\{\mathbf{A}_M\}_{M \geq 1}$, $\{\mathbf{A}_{-M,M}\}_{M \geq 1}$, and $\{\mathbf{B}_M\}_{M > 0}$. The first one is weakly projective with respect to the nonnegative real Diophantine arithmetic $\mathbf{R_+}=(\mathbb{R}_+,+,\times,\leq_{\mathbb{R}_+})$, the second one is weakly projective with respect to the real Diophantine arithmetic $\mathbf{R}=(\mathbb{R},+,\times,\leq_{\mathbb{R}})$, while the third one is projective with respect to the extended real Diophantine arithmetic $\overline{\mathbf{R}}=(\overline{\mathbb{R}},+,\times,\leq_{\overline{\mathbb{R}}})$. In addition, we have that every $\mathbf{A}_M$ and every $\mathbf{B}_M$ are a complete totally ordered semiring, while every $\mathbf{A}_{-M,M}$ is not. We show that the projection of any series of elements of $\mathbb{R}_+$ converges in $\mathbf{A}_M$, for any $M \geq 1$, and that the projection of any non-oscillating series series of elements of $\mathbb{R}$ converges in $\mathbf{A}_{-M,M}$, for any $M \geq 1$, and in $\mathbf{B}_M$, for all $M > 0$. We also prove that working in $\mathbf{A}_M$ and in $\mathbf{A}_{-M,M}$, for any $M \geq 1$, and in $\mathbf{B}_M$, for all $M>0$, allows to overcome a version of the paradox of the heap.
\end{abstract}

\maketitle
\thispagestyle{empty}

\section{Introduction.}
Although the conventional arithmetic---which we call Diophantine from Diophantus, the Greek mathematician who first approached this branch of mathematics---is almost as old as mathematics itself, it sometimes fails to correctly describe  natural phenomena. For example, in \cite{helmholtz} Helmoltz points out that adding one raindrop to another one leaves us with one raindrop, while in \cite{kline} Kline notices that Diophantine arithmetic fails to correctly describe  the result of combining gases or liquids by volume. Indeed, one  quarter of alcohol and one quarter of water only yield about 1.8 quarters of vodka. To overcome this issue, scholars started developing inconsistent arithmetics, that is, arithmetics for which one or more Peano axioms were at the same time true and false. The most striking one was ultraintuitionism, developed by Yesenin-Volpin in \cite{yesenin}, that asserted that only a finite quantity of natural numbers exists. Other authors suggested that numbers are finite (see e.g. \cite{rosinger} and \cite{vanbendegem}), while different scholars adopted a more moderate approach. The inconsistency of these alternative arithmetics lies in the fact that they are all grounded in the ordinary Diophantine arithmetic. The first consistent alternative to Diophantine arithmetic was proposed by Burgin \cite{Burgin1977}, and the name non-Diophantine seemed perfectly suited for this arithmetic. Non-Diophantine arithmetics for natural and whole numbers have been studied by Burgin in \cite{Burgin1977, Burgin2020, Burgin2019, Burgin2017}, while those for real and complex numbers by Czachor in \cite{Aerts,Czachor2016}. A complete account on non-Diophantine arithmetics can be found in the recent book by Burgin and Czachor \cite{Burgin2020_2}.


There are two types of non-Diophantine arithmetics: dual and projective. In this paper, we work with the latter. We start by defining an abstract prearithmetic $\mathbf{A}$,
$$\mathbf{A}:=(A,+_A,\times_A,\leq_A),$$where $A\subset \mathbb{R}$ is the \textit{carrier} of $\mathbf{A}$ (that is, the set of the elements of $\mathbf{A}$), $\leq_A$ is a partial order on $A$, and $+_A$ and $\times_A$ are two binary operations
 defined on the elements of $A$. We conventionally call them addition and multiplication, but that can be any generic operation. Naturally, the conventional Diophantine arithmetic $\mathbf{R}=(\mathbb{R},+,\times,\leq_{\mathbb{R}})$ of real numbers 
 is an abstract preartithmetic.\footnote{Notice that $+$ is the usual addition, $\times$ is the usual multiplication, and $\leq_{\mathbb{R}}$ is the  usual partial order on $\mathbb{R}$.} 

Abstract prearithmetic $\mathbf{A}$ is called \textit{weakly projective} with respect to a second abstract prearithmetic $\mathbf{B}=(B,+_B,\times_B,\leq_B)$ if there exist two functions $g:{A} \rightarrow {B}$ and $h:{B} \rightarrow {A}$ such that, for all $a,b \in A$,
\begin{align*}
a +_A b = h(g(a) +_B g(b)) \quad \text{ and } \quad
a \times_A b = h(g(a) \times_B g(b)).
\end{align*}
Function $g$ is called the \textit{projector} and function $h$ is called the \textit{coprojector} for the pair $(\mathbf{A}, \mathbf{B})$. 

The \textit{weak projection} of the sum $a+_B b$ of two elements of $B$ onto $A$ is defined as $h(a+_B b)$, while the weak projection of the product $a \times_B b$ of two elements of $B$ onto $A$ is defined as $h(a \times_B b)$.

Abstract prearithmetic $\mathbf{A}$ is called \textit{projective} with respect to abstract prearithmetic $\mathbf{B}$ if it is weakly projective with respect to $\mathbf{B}$, with projector $f^{-1}$ and coprojector $f$. We call $f$, that has to be bijective, the \textit{generator} of projector and coprojector.

Weakly projective prearithmetics depend on two functional parameters, $g$ and $h$---one, $f$, if they are projective---and recover the conventional Diophantine arithmetic when these functions are the identity.
 To this extent, we can consider non-Diophantine arithmetics as a generalization of the Diophantine one.

In this work, we consider three classes of abstract prearithmetics, $\{\mathbf{A}_M\}_{M \geq 1}$, $\{\mathbf{A}_{-M,M}\}_{M \geq 1}$, and $\{\mathbf{B}_M\}_{M > 0}$. They are useful to describe some natural and tech phenomena for which the conventional Diophantine arithmetic fails, and their elements allow us to overcome the version of the paradox of the heap (or sorites paradox) stated in \cite[Section 2]{Burgin2017}. The setting of this variant of the sorites paradox is adding one grain of sand to a heap of sand, and the question is, once a grain is added, whether the heap is still a heap. The heart of sorites paradox is the issue of vagueness, in this case vagueness of the word ``heap''.  

We show that every element $\mathbf{A}_{M}$ of the first class is a complete totally ordered semiring, and it is weakly projective with respect to $\mathbf{R_+}$. Furthermore, we prove that the weak projection of any series $\sum_n a_n$ of elements of $\mathbb{R}_+:=[0,\infty)$ is  convergent in each $\mathbf{A}_{M}$. 

The elements $\mathbf{A}_{-M,M}$ of the second class allow to overcome the paradox of the heap and are weakly projective with respect to the real Diophantine arithmetic $\mathbf{R}=(\mathbb{R},+,\times,\leq_{\mathbb{R}})$. The weak projection of any non-oscillating series $\sum_n a_n$ of terms in $\mathbb{R}$ is convergent in $\mathbf{A}_{-{M^\prime},{M^\prime}}$, for all ${M^\prime} \geq 1$. The drawback of working with this class is that its elements are not semirings, because the addition operation is not associative. 

The elements of $\{\mathbf{B}_{M}\}_{M > 0}$ too can be used to solve the paradox of the heap. They are complete totally ordered semirings, and are projective with respect to the extended real Diophantine arithmetic $\overline{\mathbf{R}}=(\overline{\mathbb{R}},+,\times,\leq_{\overline{\mathbb{R}}})$. The projection of any non-oscillating series $\sum_n a_n$ of terms in $\mathbb{R}$ is convergent in $\mathbf{B}_{{M^\prime}}$, for all ${M^\prime} > 0$. 

\begin{table}[h!]
\begin{spacing}{1.5}
\resizebox{\linewidth}{!}{
\begin{tabular}{lll}
\hline
$\mathbf{A}_M$                                                                                                                           & $\mathbf{A}_{-M,M}$                                                                                                                                   & $\mathbf{B}_M$                                                                                                                                   \\ \hline
\multicolumn{1}{|l|}{$A_M=[0,M]$}                                                                                                        & \multicolumn{1}{l|}{$A_{-M,M}=[-M,M]$}                                                                                                                & \multicolumn{1}{l|}{$B_M=[0,M]$}                                                                                                                 \\ \hline
\multicolumn{1}{|l|}{Solves paradox of the heap}                                                                                         & \multicolumn{1}{l|}{Solves paradox of the heap}                                                                                                       & \multicolumn{1}{l|}{Solves paradox of the heap}                                                                                                  \\ \hline
\multicolumn{1}{|l|}{Weakly projective wrt $\mathbf{R}_+$}                                                                               & \multicolumn{1}{l|}{\begin{tabular}[c]{@{}l@{}}Weakly projective wrt $\mathbf{R}$\end{tabular}}                                                    & \multicolumn{1}{l|}{\begin{tabular}[c]{@{}l@{}}Projective wrt $\overline{\mathbf{R}}$\end{tabular}}                                           \\ \hline
\multicolumn{1}{|l|}{Complete totally ordered semiring}                                                                                  & \multicolumn{1}{l|}{Not a semiring}                                                                                                                   & \multicolumn{1}{l|}{Complete totally ordered semiring}                                                                                           \\ \hline
\multicolumn{1}{|l|}{\begin{tabular}[c]{@{}l@{}}Weak projection of a series of elements of \\ $\mathbb{R}_+$ is convergent\end{tabular}} & \multicolumn{1}{l|}{\begin{tabular}[c]{@{}l@{}}Weak projection of a series of elements of \\ $\mathbb{R}$ is absolutely convergent\end{tabular}}      & \multicolumn{1}{l|}{\begin{tabular}[c]{@{}l@{}}Projection of a series of elements of \\ $\mathbb{R}$ is absolutely convergent\end{tabular}}      \\ \hline
\multicolumn{1}{|c|}{--}                                                                                                                 & \multicolumn{1}{l|}{\begin{tabular}[c]{@{}l@{}}Weak projection of a non-oscillating series \\ of elements of $\mathbb{R}$ is convergent\end{tabular}} & \multicolumn{1}{l|}{\begin{tabular}[c]{@{}l@{}}Projection of a non-oscillating series \\ of elements of $\mathbb{R}$ is convergent\end{tabular}} \\ \hline
\end{tabular}}
\end{spacing}
\caption{A summary of the properties of the classes we introduce in this paper. All of them can be used to describe those (natural and tech) phenomena for which Diophantine arithmetics fail.}
\label{tab1}
\end{table}

The paper is divided as follows. Section \ref{sec2} deals with $\{\mathbf{A}_M\}_{M \geq 1}$. Section \ref{sec4} presents the class $\{\mathbf{A}_{-M,M}\}_{M \geq 1}$, while 
$\{\mathbf{B}_{M}\}_{M > 0}$ is discussed in Section \ref{app1}.

\section{Class $\{\mathbf{A}_M\}_{M \geq 1}$.}\label{sec2}
For any real $M\geq 1$, we define the corresponding non-Diophantine preartithmetic as
$$\mathbf{A}_M=(A_M,\oplus,\otimes,\leq_{A_M})$$
having the following properties:
\begin{itemize}
	\item[(i)] The order relation $\leq_{A_M}$ is the restriction to ${A}_M$ of the usual order on the reals;
    \item[(ii)] $A_M \subset \mathbb{R}_+$ has maximal element $M$ and minimal element $0$ with respect to $\leq_{A_M}$, and is such that
    \begin{itemize}
    \item[\textbullet] $0 \in A_M$, which ensures having a multiplicative absorbing and additive neutral element in our set;
    \item[\textbullet] $1\in A_M$, which ensures having a multiplicative neutral element in our set;
    \item[\textbullet] there is at least an element $x\in(0,1)$ such that $x \in A_M$;
    \end{itemize}
    \item[(iii)] It is closed under the following two operations
     $$\oplus:A_M\times A_M\rightarrow A_M, \quad \oplus:(a,b)\mapsto a \oplus b:=\min(M,a+b)$$
     and
    $$\otimes:A_M\times A_M\rightarrow A_M, \quad \otimes:(a,b)\mapsto a \otimes b:=\min(M,a \times b),$$
    where $+$ and $\times$ denote the usual sum and product in $\mathbb{R}$, respectively.
\end{itemize}

\begin{proposition}\label{prop2}
Addition $\oplus$ is associative.
\end{proposition}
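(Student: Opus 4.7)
The plan is to show that both parenthesizations collapse to the same expression $\min(M,a+b+c)$, from which associativity follows immediately. The key auxiliary observation I would isolate first is that for any $x,y\ge 0$ and $M\ge 0$,
\[
\min\bigl(M,\ \min(M,x)+y\bigr)=\min(M,\,x+y).
\]
This is proved by a short case split: if $x\le M$ the inner minimum is just $x$ and both sides equal $\min(M,x+y)$; if $x>M$ then the left-hand side becomes $\min(M,M+y)=M$, and the right-hand side is $\min(M,x+y)=M$ since $x+y>M$ (we use $y\ge 0$, which holds because $A_M\subseteq \mathbb{R}_+$).

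Once this lemma is in hand, unfolding the definition gives
\[
(a\oplus b)\oplus c=\min\bigl(M,\ \min(M,a+b)+c\bigr)=\min(M,\,a+b+c),
\]
where the second equality applies the lemma with $x=a+b$ and $y=c$. A symmetric computation, applying the lemma with $x=b+c$ and $y=a$ (together with commutativity of ordinary addition inside the outer $\min$), shows
\[
a\oplus(b\oplus c)=\min\bigl(M,\ a+\min(M,b+c)\bigr)=\min(M,\,a+b+c).
\]
Equating the two displays yields $(a\oplus b)\oplus c=a\oplus(b\oplus c)$ for all $a,b,c\in A_M$.

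There is no real obstacle here; the only subtlety is making sure to use the hypothesis $A_M\subseteq\mathbb{R}_+$ (so that all summands are nonnegative) in the case $x>M$ of the auxiliary lemma, since otherwise $x+y>M$ could fail. Everything else is routine case analysis with the truncation operator $\min(M,\cdot)$.
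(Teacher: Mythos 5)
Your proof is correct and follows essentially the same route as the paper's: an elementary case analysis on whether the intermediate sums exceed $M$, showing both parenthesizations reduce to the ordinary sum truncated at $M$. Your packaging via the absorption identity $\min\bigl(M,\min(M,x)+y\bigr)=\min(M,x+y)$ for $x,y\ge 0$ is in fact slightly tidier than the paper's argument, which writes $d+c=(a+b)+c$ even in the edge case $d=M\neq a+b$; your lemma handles that case explicitly (using nonnegativity, exactly as you flag) and also delivers the closed form $\min(M,a+b+c)$ that the paper invokes immediately afterwards for finite sums $\bigoplus_{n=1}^k x_n$.
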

\begin{proof}
Pick any $a,b,c \in A_M$. Let $a\oplus b=d$, where $d=a+b$ if $a+b \leq M$ or $d=M$ if $a+b > M$. Let also $b\oplus c=e$, where $e=b+c$ if $b+c \leq M$ or $e=M$ if $b+c > M$. Then,
$$(a\oplus b)\oplus c=d\oplus c=\begin{cases}
d+c=(a+b)+c & \text{if } d+c \leq M\\
M & \text{if } d+c  > M
\end{cases}$$ and $$a\oplus (b\oplus c)=a\oplus e=\begin{cases}
a+e=a+(b+c) & \text{if } a+e \leq M\\
M & \text{if } a+e  > M
\end{cases}.$$
But we know that $(a+b)+c=a+(b+c)$, so the result follows.
\end{proof}

Since addition $\oplus$ is associative we have that, for every $k\in\mathbb{N}$,
$$\bigoplus_{n=1}^kx_n=\min\left(M,\sum_{n=1}^nx_n\right).$$
By imposing on $M$ the relative topology derived from $\mathbb{R}$, we can define
$$\bigoplus^{\infty}_{n=1}x_n:=\lim_{k\rightarrow\infty}\bigoplus_{n=1}^kx_n=\min\left(M,\sum_{n=1}^{\infty}x_n\right).$$

\begin{proposition}\label{prop3}
$A_{M}=[0,M]$.
\end{proposition}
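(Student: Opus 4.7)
The plan is to prove both inclusions; the containment $A_M \subseteq [0,M]$ is immediate since axiom (ii) places $A_M$ inside $\mathbb{R}_+$ with minimum $0$ and maximum $M$ under $\leq_{A_M}$. All the content lies in showing $[0,M] \subseteq A_M$. The key resources are the element $x \in (0,1) \cap A_M$ guaranteed by axiom (ii) and the (implicit) closure of $A_M$ under the infinite operation $\bigoplus^{\infty}$ introduced via the relative topology just before the proposition.

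First I would generate a null sequence inside $A_M$: by induction, $x^n := x \otimes \cdots \otimes x \in A_M$ for every $n \geq 1$, the min-truncation in $\otimes$ never activating because $x^n < 1 \leq M$. Hence $x^n \to 0$ in $A_M$. Next, given $y \in (0,M)$, I would build a greedy ``base-$x$'' expansion of $y$: set $r_0 := y$ and recursively $n_k := \lfloor r_{k-1}/x^k \rfloor$, $r_k := r_{k-1} - n_k x^k$. A routine estimate yields $0 \leq r_k < x^k$, so $\sum_{k=1}^{\infty} n_k x^k = y$ and the integer coefficients satisfy $n_k x^k < x^{k-1} \leq 1 \leq M$. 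Each finite block $\underbrace{x^k \oplus \cdots \oplus x^k}_{n_k \text{ terms}}$ therefore equals $n_k x^k$ and lies in $A_M$ by closure under $\oplus$. Feeding the sequence $(n_k x^k)_k$ into $\bigoplus_{k=1}^{\infty}$ gives $\min(M, y) = y$, which then lies in $A_M$. The endpoints $y = 0$ and $y = M$ are handled by axiom (ii) directly.

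The main subtlety is legitimating the last step: one needs that the value produced by $\bigoplus^{\infty}$ genuinely belongs to $A_M$, not merely to $[0,M]$. Axioms (i)--(iii) only record binary closure, so this infinite-sum closure must be read into the sentence preceding Proposition~\ref{prop3} that equips $A_M$ with the relative topology from $\mathbb{R}$ (without which the axioms admit strictly smaller sets, e.g.\ $\{0,1,M\} \cup \{x^n : n \geq 0\}$ closed under $\oplus,\otimes$ and truncated at $M$). Granting that reading, the greedy expansion plus the $\bigoplus^{\infty}$ closure yields every $y \in [0,M]$ as an element of $A_M$, completing the second inclusion.
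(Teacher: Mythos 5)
Your proposal is correct and follows essentially the same route as the paper: fix $x \in (0,1) \cap A_M$, form a greedy base-$x$ expansion of an arbitrary $z \in (0,M]$ whose partial sums converge to $z$, and invoke closure of $A_M$ under arbitrary $\oplus$-summations to conclude $z \in A_M$, with the reverse inclusion immediate from axiom (ii). Your explicit remark that binary closure alone would not suffice (and that infinite-sum closure must be read into the topology discussion preceding the proposition) is exactly the hypothesis the paper's proof invokes with the phrase ``closed under \dots arbitrary summations,'' so the two arguments coincide.
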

\begin{proof}
Recall that ${A}_M$ is closed under products $\otimes$ and arbitrary summations $\oplus$. Fix any $x \in (0,1)$ and let $z\in(0,M]$. Then, it is always possible to express
$$z=\bigoplus_{n=1}^{\infty}\left(\bigoplus_{j=1}^{k_n}x^n\right),$$
for some $(k_n)\in\mathbb{N}^{\mathbb{N}}$, defined recursively as follows
\begin{itemize}
    \item $k_1=\max\{m\in\mathbb{N}:z>mx\}$;
    \item $k_{j+1}=\max\{m\in\mathbb{N}:z>\sum_{u=1}^jk_ux^u+mx^j\}$.
\end{itemize}
It is clear that the partial sums are always less than $z\leq M$, and so are all nonnegative and well defined. Moreover,
$$\left| z-\bigoplus_{n=1}^{m}\left(\bigoplus_{j=1}^{k_n}x^n\right)\right| \leq x^m\rightarrow0.$$
Finally observe that the result of our two operations is always less or equal than $M$ and, from non-negative numbers, it is impossible to obtain negative numbers.
\end{proof}

This result implies that $\mathbf{A}_M$ is a complete totally ordered semiring. In this Section, we consider the class $\{\mathbf{A}_M\}_{M \geq 1}$ of abstract prearithmetics.

\begin{remark}
Notice that $\mathbf{A}_M$ cannot be a ring because for any $a \in A_M\backslash\{0\}$, it lacks the additive inverse $-a$; this because we defined $A_M$ to be a subset of $\mathbb{R}_+$. Notice also that in this abstract prearithmetic $M$ is an idempotent element, that is, $M \oplus M \oplus \cdots \oplus M=M$.
\end{remark}

\subsection{Overcoming the paradox of the heap.}
The paradox of  the heap is a paradox that arises from vague predicates. A formulation of such paradox (also called the sorites paradox, from the Greek word $\sigma \omega \rho o \varsigma$, ``heap''), given in \cite[Section 2]{Burgin2017}, is the following. 
\begin{itemize}
\item[(1)] One million grains of sand make a heap;
\item[(2)] If one grain of sand is added to this heap, the heap stays the same;
\item[(3)] However, when we add $1$ to any natural number, we always get a new number. 
\end{itemize}
This formulation of the paradox of the heap is proposed by Burgin to inspect whether adding \$$1$ to the assets of a millionaire makes them ``more of a millionaire'', or leaves their fortune unchanged. We use the class $\{\mathbf{A}_M\}_{M \geq 1}$ to address paradox of the heap.  Indeed, it is enough  to take the element of the class for which $M=1000000$, so that when we perform the addition $M \oplus 1$, we get $M$. This conveys the idea that adding a grain of sand to the heap leaves us with a heap. 

The abstract prearithmetic we introduced can also be used to describe phenomena like the one noted by Helmholtz in \cite{helmholtz}: adding one raindrop to another one gives one raindrop, or the one pointed out by Lebesgue (cf. \cite{kline}): putting a lion and a rabbit in a cage, one will not find two animals in the cage later on. In both these cases, it suffices to consider the element  of the class for which $M=1$, so that $1 \oplus 1=1$. 

$\mathbf{A}_M$ allows us also to avoid introducing inconsistent Diophantine arithmetics, that is, arithmetics for which one or more Peano axioms were at the same time true and false. For example, in \cite{rosinger} Rosinger points out that electronic digital computers, when operating on the integers, act according to the usual Peano axioms for $\mathbb{N}$ plus an extra ad-hoc axiom, called the machine infinity axiom. The machine infinity axiom states that there exists $\breve{M} \in \mathbb{N}$ far greater than $1$ such that $\breve{M}+1=\breve{M}$ (for example, $\breve{M}=2^{31}-1$ is the maximum positive value for a $32$-bit signed binary integer in computing). Clearly, Peano axioms and the machine infinity axiom together give rise to an inconsistency, which can be easily avoided by working with $\mathbf{A}_{\breve{M}}$.

In \cite{vanbendegem}, Van Bendegem developed an inconsistent axiomatic arithmetic similar to the ``machine'' one described in \cite{rosinger}. He changed the Peano axioms so that a number that is the successor of itself exists. In particular, the fifth Peano axiom states that if $x=y$, then $x$ and $y$ are the same number. In the system of Van Bendegem, starting from some number $n$, all its successors will be equal to $n$. Then, the statement $n=n+1$ is considered as both true and false at the same time, giving rise to an inconsistency. It is immediate to see how this inconsistency can be overcome by working with any abstract prearithmetic $\mathbf{A}_M$ in our class.

\subsection{$\mathbf{A}_M$ is weakly projective with respect to $\mathbf{R_+}$.}
Pick any $\mathbf{A}_{M^\prime} \in \{\mathbf{A}_M\}$, and consider $\mathbf{R_+}=(\mathbb{R}_+,+,\times,\leq_{\mathbb{\mathbb{R}_+}})$. Consider then the functions:
\begin{itemize}
\item $g:A_M \rightarrow \mathbb{R}_+$, $a \mapsto g(a) \equiv a$, so $g$ is the identity function $\left.\text{Id}\right|_{A_M}$;
\item $h:\mathbb{R}_+ \rightarrow A_M$, $a \mapsto h(a):=\min(M,a)$.
\end{itemize}
Now, if we compute $h(g(a)+g(b))$, for all $a,b \in A_M$, we have that $$h(g(a)+g(b))=h(a+b)=\min(M,a+b)=a \oplus b.$$ Similarly, we show that $h(g(a)\times g(b))=a \otimes b$. Hence, addition and multiplication in $\mathbb{R}_+$ are weakly projected onto addition and multiplication in $A_{M^\prime}$, respectively. So, we can conclude that $\mathbf{A}_{M^\prime}$ is weakly projective with respect to $\mathbf{R_+}$, for all ${M^\prime} \geq 1$. 

\subsection{Series of elements of $\mathbf{A}_M$.} 
Consider any series $\bigoplus_n a_n$ of elements of $A_M$. Its corresponding series $\sum_n g(a_n)$ in $\mathbb{R}_+$ can only be convergent or divergent to $+\infty$. It cannot be divergent to $-\infty$ because we are summing positive elements only, and it cannot be neither convergent nor divergent (i.e. it cannot oscillate), because the elements of the series cannot alternate their sign. But since $A_M$ has a maximal and a minimal element, $M$ and $0$ respectively, this means that $\bigoplus_n a_n$ is always convergent.

\subsection{Weak projection of series in $\mathbf{R_+}$ onto $\mathbf{A}_M$.}
In this Section, we show that the weak projection of any series of elements of $\mathbb{R}_+$ converges in $A_M$, for all $M \geq 1$. This is an exciting result because it allows the scholar that needs a particular series to converge in their analysis to reach that result by performing a weak projection of the series onto $\mathbf{A}_M$, and then continue the analysis in $\mathbf{A}_M$.

Consider any series $\sum_n a_n$ of elements of $\mathbb{R}_+$. Similarly to what we pointed out before, it can be convergent or divergent to $+\infty$. It cannot be divergent to $-\infty$ because we are summing positive elements only, and it cannot be neither convergent nor divergent (i.e. it cannot oscillate), because the elements of the series cannot alternate their sign. Let us then show that the weak projection of $\sum_{n=1}^\infty a_n:=\lim\limits_{k \rightarrow \infty} \sum_{n=1}^k a_n$ is convergent.

First, suppose that $\sum_{n=1}^\infty a_n =L < M$. Then, $h( \sum_{n=1}^\infty   a_n  ) =h(L)=L$.

Then, let $\sum_{n=1}^\infty a_n =L  \geq M$. We have that $h( \sum_{n=1}^\infty  a_n  ) =h(L)=M$, where in both cases the last equality comes from the definition of $h$. 

Finally, suppose that $\sum_{n=1}^\infty  a_n =\infty$. Then, $$h\left( \sum_{n=1}^\infty  a_n \right) = h\left( \lim_{k \rightarrow \infty} \sum_{n=1}^k  a_n \right)= \lim_{k \rightarrow \infty} h\left( \sum_{n=1}^k  a_n \right) = M,$$where the second equality comes from $h$ being continuous, and the last equality comes from the fact that function $h$ is constant once its argument is equal to $M$. To check that $h$ is continuous, we just need to check that it is continuous at $M$; but this is immediate, since $\lim_{a \rightarrow M^-} h(a)=M=\lim_{a \rightarrow M^+} h(a)$. 

The following lemma comes immediately from Proposition \ref{prop2}.


\begin{lemma}\label{lem1}
For any series $\sum_{n=1}^\infty a_n$ of elements of $\mathbb{R}_+$,  for any $k \in \mathbb{N}$,
$$h \left( \sum_{n=1}^\infty a_n \right) = h \left( \sum_{n=1}^k a_n \right) \oplus h \left( \sum_{n=k+1}^\infty a_n \right).$$
\end{lemma}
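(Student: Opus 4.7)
The plan is to reduce the claim to the pointwise identity
$$h(x+y)=h(x)\oplus h(y) \qquad \text{for all } x,y\in\mathbb{R}_+\cup\{+\infty\},$$
and then specialize with $x=\sum_{n=1}^{k}a_n$ and $y=\sum_{n=k+1}^{\infty}a_n$. This pointwise identity is the essential content, and once it is in hand the lemma follows by a single substitution, using that both partial sums in question are elements of $\mathbb{R}_+\cup\{+\infty\}$ since the $a_n$ are nonnegative (so the tail is well-defined as either a finite nonnegative real or $+\infty$).

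First I would establish the pointwise identity by a short case analysis. Unfolding definitions, the left side is $\min(M,x+y)$ and the right side is $\min(M,\min(M,x)+\min(M,y))$. If $x\leq M$ and $y\leq M$, both $\min(M,x)=x$ and $\min(M,y)=y$, so the two sides coincide. If instead $x>M$ (the case $y>M$ is symmetric), then $x+y>M$, so the left side equals $M$; on the right, $\min(M,x)=M$, so $\min(M,x)+\min(M,y)\geq M$ and the right side also equals $M$. The case $x=+\infty$ or $y=+\infty$ is handled by the same argument with the convention $h(+\infty)=M$, which is forced by continuity of $h$ and consistent with the discussion preceding the lemma.

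The connection to Proposition \ref{prop2} (associativity of $\oplus$) is what justifies calling the lemma an immediate consequence: associativity gives $\bigoplus_{n=1}^{N}x_n=\min(M,\sum_{n=1}^{N}x_n)$ for any finite collection, and the pointwise identity above is exactly the two-term instance of this phenomenon, namely the statement that clipping at $M$ commutes with grouping. Applying the pointwise identity with $x=\sum_{n=1}^{k}a_n$ and $y=\sum_{n=k+1}^{\infty}a_n\in\mathbb{R}_+\cup\{+\infty\}$, and noting that $x+y=\sum_{n=1}^{\infty}a_n$, yields
$$h\!\left(\sum_{n=1}^{\infty}a_n\right)=h\!\left(\sum_{n=1}^{k}a_n\right)\oplus h\!\left(\sum_{n=k+1}^{\infty}a_n\right),$$
which is the claim.

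There is no real obstacle; the only mild subtlety is making sure the extension $h(+\infty):=M$ is used consistently so that the identity covers the divergent case $\sum_{n=1}^{\infty}a_n=+\infty$, which has already been handled in the paragraph preceding the lemma when establishing that the weak projection converges.
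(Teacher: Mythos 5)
Your proof is correct and supplies exactly the detail the paper leaves implicit: the paper's own proof is the single line ``immediate from the associativity of $\oplus$,'' whereas you isolate and verify the pointwise identity $h(x+y)=h(x)\oplus h(y)$ on $\mathbb{R}_+\cup\{+\infty\}$, which is the actual content of the lemma (the hypotheses place $\sum_{n=1}^k a_n$ and the tail in $\mathbb{R}_+\cup\{+\infty\}$, not in $A_M$, so one genuinely needs this identity for $h$ rather than associativity of $\oplus$ alone). Your case analysis is sound --- in particular the observation that $x>M$ forces both sides to equal $M$ because $y\geq 0$, and the convention $h(+\infty)=M$ matches the paper's treatment of divergent series in the paragraph preceding the lemma --- so this is essentially the same ``clipping commutes with grouping'' idea, carried out carefully.
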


\section{Class $\{\mathbf{A}_{-M,M}\}_{M \geq 1}$}\label{sec4}
For any real $M \geq 1$, we define the corresponding non-Diophantine prearithmetic as
$$\mathbf{A}_{-M,M}=(A_{-M,M},\boxplus,\boxtimes,\leq_{A_{-M,M}})$$
having the following properties:
\begin{itemize}
\item[(i)] The order relation $\leq_{A_{-M,M}}$ is the restriction to ${A}_{-M,M}$ of the usual order on the reals;
\item[(ii)] $A_{-M,M} \subset \mathbb{R}$ has maximal element $M$ and minimal element $-M$ with respect to $\leq_{A_{-M,M}}$, and is such that
\begin{itemize}
\item[\textbullet] $0 \in A_{-M,M}$, which ensures having a multiplicative absorbing and additive neutral element in our set;
\item[\textbullet] $1 \in A_{-M,M}$, which ensures having a multiplicative neutral element in our set;
\item[\textbullet] $-M < 0 < M$ and there is at least an element $x \in (0,1)$ such that $x \in A_{-M,M}$;
\item[\textbullet] $b \in A_{-M,M} \implies -b \in A_{-M,M}$;
\end{itemize}
\item[(iii)] It is closed under the following two operations
\begin{align*}
\boxplus: A_{-M,M} \times A_{-M,M} &\rightarrow A_{-M,M},\\ (a,b) &\mapsto a \boxplus b:=\begin{cases}
a+b & \text{if } a+b \in  [-M,M]\\
M & \text{if } a+b > M\\
-M & \text{if } a+b < -M
\end{cases},
\end{align*}
where $+$ denotes the usual sum in $\mathbb{R}$, and
\begin{align*}
\boxtimes: A_{-M,M} \times A_{-M,M} &\rightarrow A_{-M,M}, \\ (a,b) &\mapsto a \boxtimes b:=\begin{cases}
a \times b & \text{if } a \times b \in  [-M,M]\\
M & \text{if } a \times b > M\\
-M & \text{if } a \times b <-M
\end{cases},
\end{align*}
where $\times$ denotes the usual product in $\mathbb{R}$.
\end{itemize}
Since $\mathbf{A}_{-M,M}$ is closed under products $\boxtimes$ and arbitrary summations $\boxplus$, it follows, along the lines of Proposition \ref{prop3}, that $A_{-M,M}=[-M,M]$, for all $M \geq 1$. Notice also that from the definition of addition $\boxplus$ it follows immediately that $\boxplus$ is commutative, but $\mathbf{A}_{-M,M}=(A_{-M,M},\boxplus,\boxtimes,\leq_{A_{-M,M}})$ is not a semiring. Indeed, addition $\boxplus$ is not associative. An easy counterexample is the following: $-2 \boxplus (M \boxplus 1)=-2 \boxplus M=M-2$, while $(-2 \boxplus M) \boxplus 1=-1 \boxplus M=M-1$.

Despite that, the elements of $\{\mathbf{A}_{-M,M}\}_{M \geq 1}$ can still be useful. Any $\mathbf{A}_{-{M^\prime},{M^\prime}}$ still solves the paradox of the heap. If we then consider the functions
\begin{itemize}
\item $s:A_{-M,M} \rightarrow \mathbb{R}$, $a \mapsto s(a) \equiv a$, so $s$ is the identity function $\left.\text{Id}\right|_{A_{-M,M}}$;
\item $t:\mathbb{R} \rightarrow A_{-M,M}$, $a \mapsto t(a):=\begin{cases}
a &\text{if } a \in [-M,M]\\
M &\text{if } a > M\\
-M &\text{if } a < -M
\end{cases}$,
\end{itemize}
it is immediate to see that, for all $M^\prime \geq 1$, $\mathbf{A}_{-{M^\prime},{M^\prime}}$ is weakly projective with respect to the real arithmetic $\mathbf{R}=(\mathbb{R},+,\times,\leq_{\mathbb{R}})$, with projector $s$ and coprojector $t$. 
\begin{proposition}\label{prop1}
Pick any real $M \ge 1$, and any sequence $(x_n) \in \mathbb{R}^\mathbb{N}$. For every $n$, define $y_n:=\sum_{n \leq k} x_n$. Then, the weak projection of $(y_n)$ converges in $\mathbf{A}_{-M,M}$ if and only if  one of the following holds:
\begin{itemize}
\item[(i)] $\liminf_n y_n \geq M$;
\item[(ii)] $\lim y_n$ exists and belongs to $(-M,M)$;
\item[(iii)] $\limsup_n y_n \leq -M$.
\end{itemize}
\end{proposition}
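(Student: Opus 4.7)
The plan is to reduce the whole statement to elementary real analysis of the clamped sequence $(t(y_n))$ in $[-M,M]$. The key structural observation is that $t\colon \mathbb{R} \to [-M,M]$ is continuous, monotone nondecreasing, acts as the identity on $[-M,M]$, and collapses everything outside $[-M,M]$ to the boundary points $\pm M$; in particular, the preimage under $t$ of any relatively open subinterval of $(-M,M)$ is contained in $(-M,M)$ itself, while the preimages of $M$ and $-M$ are the half-lines $[M,\infty)$ and $(-\infty,-M]$.

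For sufficiency I would handle the three cases in parallel. Under (i), the inequality $\liminf_n y_n \geq M$ forces $y_n$ eventually into $(M-\varepsilon,\infty)$ for every $\varepsilon>0$, which immediately gives $t(y_n)\in (M-\varepsilon,M]$ and hence $t(y_n)\to M$. Under (ii), convergence of $y_n$ to a point strictly inside $(-M,M)$ puts $y_n$ eventually inside $[-M,M]$, where $t$ is the identity, so $t(y_n)\to \lim_n y_n$. Case (iii) is the mirror image of (i), yielding $t(y_n)\to -M$.

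For the converse, I would fix a limit $L^\ast \in [-M,M]$ of $(t(y_n))$ and split according to whether $L^\ast$ lies in the open interval or on the boundary. The interior case is the one where the structural observation does real work: once $t(y_n)$ is within distance $\tfrac{1}{2}\min\{M-L^\ast,\, M+L^\ast\}$ of $L^\ast$, it must lie in $(-M,M)$, and by the preimage description above this forces $y_n=t(y_n)$ eventually, giving $\lim_n y_n = L^\ast \in (-M,M)$, which is (ii). The boundary cases $L^\ast=\pm M$ are handled by contradiction; for $L^\ast=M$, if $\liminf_n y_n<M$ one extracts $\varepsilon>0$ and a subsequence with $y_{n_k}\leq M-\varepsilon$, and monotonicity of $t$ yields $t(y_{n_k})\leq M-\varepsilon$ for all $k$, contradicting $t(y_n)\to M$; the case $L^\ast=-M$ is symmetric.

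I do not anticipate any genuine obstacle; the argument is purely topological, and the only point that deserves care is the interior case, where one really has to use that $t$ never takes a value strictly inside $(-M,M)$ at any point of $\mathbb{R}\setminus[-M,M]$. It is worth noting that nothing in the proof invokes associativity of $\boxplus$, which is precisely the structural property that $\mathbf{A}_{-M,M}$ lacks; the convergence question therefore depends only on the pointwise action of the coprojector $t$ on the real partial sums.
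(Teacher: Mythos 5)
Your proof is correct, and in the ``only if'' direction it is actually more careful than the paper's own argument. Both proofs reduce the statement to elementary analysis of the clamped sequence $(t(y_n))$, and your sufficiency argument (eventual containment of $y_n$ in $(M-\varepsilon,\infty)$, in $(-M,M)$, or in $(-\infty,-M+\varepsilon)$, respectively) matches the paper's use of continuity of $t$ in substance. The difference is in the converse: the paper writes $\lim_n t(y_n)=t(\lim_n y_n)$ at the outset, which tacitly presupposes that $\lim_n y_n$ exists in $\overline{\mathbb{R}}$ --- precisely what must be established --- and it only examines the case $\lim_n t(y_n)\in(-M,M)$, leaving the boundary limits untreated. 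You instead start from $L^{\ast}=\lim_n t(y_n)\in[-M,M]$ and recover the hypotheses: in the interior case you use that $t^{-1}\bigl((-M,M)\bigr)=(-M,M)$ to force $y_n=t(y_n)$ eventually, and in the boundary cases you argue by contradiction via a subsequence bounded away from $\pm M$. This closes the gap in the published proof and yields the full equivalence. Your closing remark that the argument nowhere uses associativity of $\boxplus$ is apt, since the weak projection of the series is defined through $t$ acting on the real partial sums. One microscopic point: in the case $L^{\ast}=M$ your bound $t(y_{n_k})\leq M-\varepsilon$ requires $M-\varepsilon\geq -M$; either take $\varepsilon$ small without loss of generality or replace the bound by $t(y_{n_k})\leq\max\{M-\varepsilon,-M\}<M$, which still contradicts $t(y_n)\to M$.
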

\begin{proof}

The fact that $t$ is continuous is obvious. Suppose then that $\lim_n t(y_n)=t(\lim_n y_n) \in (-M,M)$, where the equality comes from the continuity of $t$. By definition of $t$, we have that either $\lim_n y_n$ belongs to $(-M,M)$, verifying condition (ii), or it does not. In this latter case, it may be that either $\lim_n y_n \geq M$, verifying condition (i), or $\lim_n y_n \leq -M$, verifying condition (iii).

The fact that condition (ii) implies  $\lim_n t(y_n) \in (-M,M)$ is immediate by the definition of function $t$. Suppose now that $\liminf_n y_n=\infty > M$. This implies that $\lim_n y_n=\infty$. But then, $\lim_n t(y_n)=t(\lim_n y_n)=M$, where the first equality comes form the continuity of $t$, and the last equality from the definition of $t$. A similar argument shows that if $\limsup_n y_n =-\infty$, then $\lim_n t(y_n)=-M$. If $\liminf_n y_n= L \in [M,\infty)$, the previous argument shows that $\lim_n t(y_n) =M$, while if $\limsup_n y_n= -L \in (-\infty,-M]$, the previous argument shows that $\lim_n t(y_n) =-M$. This concludes the proof.
\end{proof}
The following corollary comes immediately from Proposition \ref{prop1}.
\begin{corollary}
Pick any real $M \ge 1$. The weak projection of any series of elements of $\mathbb{R}$ is absolutely convergent in $\mathbf{A}_{-M,M}$. In addition, the weak projection of any series of elements of $\mathbb{R}$ that is either convergent or divergent (to $+\infty$ or $-\infty$), converges in $\mathbf{A}_{-M,M}$.
\end{corollary}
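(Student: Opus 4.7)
The plan is to deduce both assertions directly from Proposition~\ref{prop1} by verifying that, in each situation listed in the corollary, the sequence of partial sums $(y_n)$ falls into at least one of the three regimes (i)--(iii) of that proposition. No substantive new work is needed; the corollary is really a repackaging of the proposition into two easy-to-check sufficient conditions.

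For the absolute convergence claim, I would apply Proposition~\ref{prop1} to the sequence $Y_n := \sum_{k \leq n} |x_k|$. Since every term $|x_k|$ is nonnegative, $(Y_n)$ is monotone non-decreasing and therefore admits a limit $L \in [0,+\infty]$. I would then split into two sub-cases: if $L < M$ then $\lim_n Y_n = L \in [0,M) \subset (-M,M)$, verifying condition (ii); if $L \geq M$ (including $L = +\infty$) then $\liminf_n Y_n = L \geq M$, verifying condition (i). Either way Proposition~\ref{prop1} gives convergence of the weak projection of $(Y_n)$ in $\mathbf{A}_{-M,M}$.

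For the second claim I would perform a three-way case analysis on the asymptotic behaviour of $y_n := \sum_{k \leq n} x_k$. If the series converges to a finite $L \in \mathbb{R}$, then $\lim_n y_n = L$ and I would distinguish according to whether $L \in (-M,M)$ (which gives (ii)), $L \geq M$ (which gives (i), since $\liminf_n y_n = L \geq M$), or $L \leq -M$ (which gives (iii), since $\limsup_n y_n = L \leq -M$). If the series diverges to $+\infty$ then $\liminf_n y_n = +\infty \geq M$ and (i) holds; symmetrically, if it diverges to $-\infty$ then $\limsup_n y_n = -\infty \leq -M$ and (iii) holds. In all sub-cases, Proposition~\ref{prop1} yields convergence of the weak projection in $\mathbf{A}_{-M,M}$.

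I do not foresee any real obstacle here: the only place where one must be slightly careful is the boundary case $L = M$ (or $L = -M$) in the absolute convergence argument, where condition (ii) fails because $L \notin (-M,M)$, but condition (i) (respectively (iii)) is picked up by the $\liminf$ (respectively $\limsup$) being equal to $L$. Handling these boundary values correctly is the only non-automatic step in the case analysis.
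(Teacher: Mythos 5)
Your proof is correct and takes essentially the same route as the paper, which simply derives the corollary as an immediate consequence of Proposition~\ref{prop1} via the same case analysis on the limit, $\liminf$, and $\limsup$ of the partial sums (monotonicity of $\sum_k |x_k|$ for the first claim, a three-way split for the second). Your explicit treatment of the boundary cases $L = \pm M$ is a detail the paper leaves implicit but does not change the argument.
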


Notice that, because addition $\boxplus$ is not associative, Lemma \ref{lem1} does not hold in this class of abstract prearithmetics. This means that there may exist a series $\sum_{n=1}^\infty a_n$ of elements of $\mathbb{R}$ such that 
$$t \left( \sum_{n=1}^\infty a_n \right) \neq t \left( \sum_{n=1}^k a_n \right) \boxplus t \left( \sum_{n=k+1}^\infty a_n \right).$$
Hence, we need to project the entire series in the exact order we want the elements to be summed, otherwise Proposition \ref{prop1} may not hold.

There is a tradeoff in using this class instead of $\{\mathbf{A}_M\}$. We still manage to resolve the paradox of the heap, and we further show that the weak projection of series that diverge also to $-\infty$, converges in $\mathbf{A}_{-M,M}$, for all $M \geq 1$. The shortcoming, however, is that we lose associativity, so working with elements of $A_{-M,M}$ may be difficult. Ultimately, the choice of one or the other will depend on the application the scholar has in mind.

\section{Class $\{\mathbf{B}_M\}_{M > 0}$.}\label{app1}
In this Section we present a class of abstract prearithmetics $\{\mathbf{B}_M\}_{M > 0}$ where every element is a complete totally ordered semiring, and such that the projection of a convergent or divergent series (to $+\infty$ or $-\infty$) of elements of $\mathbb{R}$ converges. Its elements can be used to solve the paradox of the heap. 


For every real $M > 0$, we define the corresponding non-Diophantine prearithmetic as 
$$\mathbf{B}_M=(B_M,\dotplus,\dottimes,\leq_{B_M})$$ 
having the following properties:
\begin{itemize}
\item[(i)] The order relation $\leq_{B_M}$ is the restriction to $B_M$ of the usual order on the reals;
\item[(ii)] $B_M=[0,M]$;
\item[(iii)] Let $\overline{\mathbb{R}}:=[-\infty,\infty]$ and consider the function
$$f:\overline{\mathbb{R}} \rightarrow B_M, \quad x \mapsto f(x):=\begin{cases}
M \left(\frac{\arctan(x)}{\pi}+\frac{1}{2}\right) & \text{if } x \in \mathbb{R}\\
M & \text{if } x=\infty\\
0 & \text{if } x=-\infty
\end{cases}$$
and its inverse
$$f^{-1}:B_M \rightarrow \overline{\mathbb{R}}, \quad x \mapsto f^{-1}(x):=\begin{cases}
\tan\left( \frac{\pi}{M} \left( x-\frac{M}{2} \right) \right) & \text{if } x \in (0,M)\\
\infty & \text{if } x=M\\
-\infty & \text{if } x=0
\end{cases}.$$
Then, $\mathbf{B}_M$ is closed under the following two operations
$$\dotplus:B_M \times B_M \rightarrow B_M, \quad (a,b) \mapsto a \dotplus b:= f\left( f^{-1}(a)+f^{-1}(b) \right),$$
where $+$ denotes the sum in $\overline{\mathbb{R}}$, and
$$\dottimes:B_M \times B_M \rightarrow B_M, \quad (a,b) \mapsto a \dottimes b:= f\left( f^{-1}(a)\times f^{-1}(b) \right),$$
where $\times$ denotes the product in $\overline{\mathbb{R}}$.
\end{itemize}

Notice that we do not ``force'' $0$ and $M$ to be the boundary elements of $B_M$; they come naturally from the way addition $\dotplus$ and multiplication $\dottimes$ are defined. In addition, we have that by construction 
$\mathbf{B}_M$ is projective with respect to $\overline{\mathbf{R}}=(\overline{\mathbb{R}},+,\times,\leq_{\overline{\mathbb{R}}})$, and that its generator induces an homeomorphism between $\overline{\mathbb{R}}$ and $[0,M]$. This tells us immediately that  $(B_M,\dotplus,\dottimes,\leq_{B_M})$ is a complete totally ordered semiring, so addition $\dotplus$ and multiplication $\dottimes$ are associative. The fact that $+\infty$ and $-\infty$ in $\overline{\mathbb{R}}$ correspond to $M$ and $0$, respectively, 
tells us that the projection $f(\sum_n a_n)$ of any series $\sum_n a_n$ of elements of $\mathbb{R}$ converges in $\mathbf{B}_M$, as long as $\sum_n a_n$ does not oscillate. The elements of $\mathbf{B}_M$ can be used to solve the paradox of the heap; to see this, notice that $M \dotplus a=M$, for all $a \in B_M$ and all $M>0$. Also, $M$ is an idempotent element of $B_M$, for all $M>0$: $M \dotplus M \dotplus \cdots \dotplus M=M$.





\bibliographystyle{plain}
\bibliography{non_dioph} 

\end{document}